\documentclass[3p]{elsarticle}
\usepackage[utf8]{inputenc}
\usepackage{lmodern}

\usepackage{amsthm}
\usepackage{todonotes}
\newtheorem{theorem}{Theorem}
\newtheorem{lemma}[theorem]{Lemma}
\newtheorem{remark}[theorem]{Remark}

\usepackage{amsmath}
\usepackage{amssymb}
\usepackage{soul}

\usepackage{pgfplots}

\usepackage{tikz}
\usetikzlibrary{spy,shapes.misc,patterns,snakes,calc}

\usepackage{booktabs}
\usepackage[scientific-notation=true]{siunitx}
\sisetup{scientific-notation=true,round-precision=3,round-mode=figures}

\usepackage{subcaption}
\newcommand{\foralls}{\forall\,}


\newcommand{\R}{\mathbb{R}}





\newcommand{\enorm}[1]{\vert\!\vert\!\vert #1 \vert\!\vert\!\vert}

\newcommand{\pO}{\partial \Omega}

\newcommand{\dx}{\,\mathrm{d}x}

\newcommand{\ds}{\,\mathrm{d}s}




\newcommand{\mT}{\mathcal{T}}








\newcommand{\dofs}{d.o.f.'s}
\newcommand{\dof}{d.o.f.}


\newcommand{\comment}[1]{}


\biboptions{sort&compress}

\journal{Computers \& Mathematics with Applications}

\begin{document}

\begin{frontmatter}

\title{A note on the penalty parameter in Nitsche's method for unfitted boundary value problems}

\author[TUE]{Frits de Prenter\corref{mycorrespondingauthor}}
\cortext[mycorrespondingauthor]{Corresponding author. Tel.: +31 61 516 2599}
\ead{f.d.prenter@tue.nl}

\author[GAU]{Christoph Lehrenfeld}
\ead{lehrenfeld@math.uni-goettingen.de}

\author[UMU]{Andr\'{e} Massing}
\ead{andre.massing@umu.se}

\address[TUE]{Department of Mechanical Engineering, Eindhoven University of Technology, The Netherlands}
\address[GAU]{Institute for Numerical and Applied Mathematics, University of G\"{o}ttingen, Germany}
\address[UMU]{Department of Mathematics and Mathematical Statistics, Ume\aa\ University, Sweden}

\begin{abstract}
  Nitsche's method is a popular approach to implement Dirichlet-type
  boundary conditions in situations where a strong imposition is
  either inconvenient or simply not feasible. The method is widely
  applied in the context of unfitted finite element methods. From the
  classical (symmetric) Nitsche's method it is well-known that the
  stabilization parameter in the method has to be chosen sufficiently
  large to obtain unique solvability of discrete systems. In this
  short note we discuss an often used strategy to set the 
  stabilization parameter and describe a possible problem that can
  arise from this. We show that in specific situations error bounds
  can deteriorate and give examples of computations where Nitsche's
  method yields large and even diverging discretization errors.
\end{abstract}

\begin{keyword}
 Nitsche's method \sep unfitted/immersed finite element methods \sep penalty/stabilization parameter \sep accuracy \sep stability \sep error analysis
\end{keyword}

\end{frontmatter}

\section{Introduction}

We consider the discretization of an unfitted Poisson problem. The
problem domain is described separately from the encapsulating mesh, on which a finite
element basis is defined. We consider the restriction of this finite
element space with respect to the problem domain. Such an approach is
used in many methods which are similar in virtue, 
\emph{e.g.,} 
the fictitious domain method
\cite{glowinskietal94}, the cut finite element
method (CutFEM) \cite{burman2012fictitious,burman2014cutfem,MassingLarsonLoggEtAl2013a,BurmanClausMassing2015}, 
the finite cell method (FCM)
\cite{Parvizian2007,Duester2008,Schillinger2012,Rank2012,Ruess2013,Ruess2014,Schillinger2015},
immersogeometric analysis \cite{Kamensky2015,Varduhn2016,Xu2016},
the immersed boundary method \cite{Peskin2002}, the unfitted
discontinuous Galerkin method (UDG)
\cite{bastian2009unfitted,massjung12}, the extended finite element
method (XFEM) \cite{belytschkoetal01,fries2010extended,
hansbo2002unfitted, gross04, Becker20093352, BeckerBurmanHansbo2010},
and several others.  To impose essential boundary conditions, many of
these methods apply some version of the classical Nitsche's method
\cite{Nitsche1971}. 

This requires stabilization by a penalization parameter to preserve the coercivity of the bilinear operator.
Without additional stabilization of cut elements with \emph{e.g.,} ghost penalty terms \cite{Burman2010,burman2012fictitious}
-- which is customary for methods referred to as CutFEM --
this penalty parameter depends on the shape and size of the cut elements,
\emph{i.e.,} it depends on the position of the geometry relative to the
computational mesh. A typical choice which is sufficient to provide
unique solvability of discrete problems, is to choose the stabilization
parameter as an element wise constant and proportional to the ratio between
the surface measure of the intersection between an element and the
boundary and the volume measure of the intersection of the same
element and the domain. 
As this ratio can become arbitrarily large,
the stabilization parameter is not generally bounded.  

In the mathematical literature, \emph{e.g.,} \cite{burman2012fictitious,burman2014cutfem}, unfitted Nitsche formulations are generally supplemented by the aforementioned additional stabilization to bound the stabilization parameter in order to prove properties of the method.
In our opinion, the importance of this has not sufficiently been addressed in the literature however.
Moreover, the method has effectively (and successfully) been applied without additional stabilization in the engineering literature, see \emph{e.g.,} \cite{Ruess2013,Ruess2014,Schillinger2015}.
With this note, we therefore detailedly treat the possible problem that can occur when no additional measures are taken to bound the stabilization parameter and aim to provide a disclaimer for directly applying the classical form of Nitsche's method to immersed problems.
To this end, we discuss and analyze the method and demonstrate that it can lead to poor results in the discretization error when the geometry intersects the computational domain such that large values of the stabilization parameter are required.
We also investigate different situations where degenerated geometry configurations yield satisfactory and unsatisfactory results and provide a possible interpretation.
We further review alternative formulations and possible modifications and stabilizations of the method.

Section~\ref{sec:prob} of this note introduces a model problem, followed
by the Nitsche's method under consideration in
Section~\ref{sec:method}. In Section~\ref{sec:analysis} we carry out a
simple error analysis of the method in a norm which is natural to the
formulation. This analysis reveals the possible large discretization
errors for unfortunate cut configurations. In Section~\ref{sec:numex}
we give examples of bad cut configurations and show how these can
lead to large and even degenerating discretization errors. Finally, we list alternative
approaches to impose boundary conditions and variants of the classical
Nitsche's method to circumvent this possible problem in Section~\ref{sec:alternatives}.

\section{The model problem} 
\label{sec:prob} 
As a model for more general elliptic boundary value problems, we
consider the Poisson problem with inhomogeneous Dirichlet boundary
data posed on an open and bounded domain $\Omega \subset \R^d$ with
Lipschitz boundary $\pO$:
\begin{subequations} \label{eq:ellmodel}
  \begin{align}
    - \Delta u &= \, f \quad \text{in}~~ \Omega, \\
    u &= \, g \quad \text{on}~~ \pO.
  \end{align}
\end{subequations}
We assume that the properties of the domain $\Omega$
imply that a shift theorem of the form:
\begin{align}
  \Vert u \Vert_{H^2(\Omega)}
  \lesssim
  \Vert f \Vert_{L^2(\Omega)} + \Vert g \Vert_{H^{\frac12}(\pO)},
\end{align}
holds whenever $f\in L^2(\Omega)$ and $g \in
H^{\frac12}(\pO)$, \emph{e.g.,} when $\pO$ is smooth or $\Omega \subset \R^2$
is a convex domain with piecewise $C^2$ boundary~\cite{Grisvard1985}.
In this note $\lesssim$ is used to denote an inequality with a constant that is independent of the mesh.
This problem has the standard well-posed weak
formulation: Find $u \in H^1_g(\Omega)$, such that:
\begin{equation}
  \int_{\Omega} \nabla u \nabla v \, dx = \int_{\Omega} f v \, dx \quad \mathrm{for\ all\ } v \in H^1_0(\Omega).
\end{equation}
In this formulation $H^1_0(\Omega)$ and $H^1_g(\Omega)$ are the
standard Sobolev spaces with corresponding homogeneous and
inhomogeneous boundary data. For simplicity, we assume that Dirichlet
boundary conditions are posed on the whole boundary of $\Omega$. 
However, the results presented in this work extend to the case where
Dirichlet boundary data is only prescribed on a part of the boundary
as demonstrated in Section~\ref{sec:numex}.

\section{A Nitsche-based unfitted finite element method} 
\label{sec:method} 
We consider a (triangular or
rectilinear) shape regular \emph{background} discretization $\widetilde{\mT}$
which encapsulates the domain $\Omega$.
To each element $T \in \widetilde{\mT}$ we associate the local mesh size 
$h_T={\mathrm{diam}}(T)$ and define
the global mesh size by $h = \max_{T\in\widetilde{\mT}} h_T$.
We define the \emph{active} mesh $\mT$ 
by:
\begin{align} 
  \mT &= \{ T \in \widetilde{\mT} : T \cap \Omega \neq \emptyset \},
  \label{eq:narrow-band-mesh}
\end{align}
and set $\widetilde{\Omega} = \bigcup_{T\in \mT} T$, denoting the union of all
elements from the active mesh.
The set of geometric entities are illustrated in
Figure~\ref{fig:domain-set-up}.
\begin{figure}[htb]
  \begin{center}
    \includegraphics[width=0.5\textwidth]{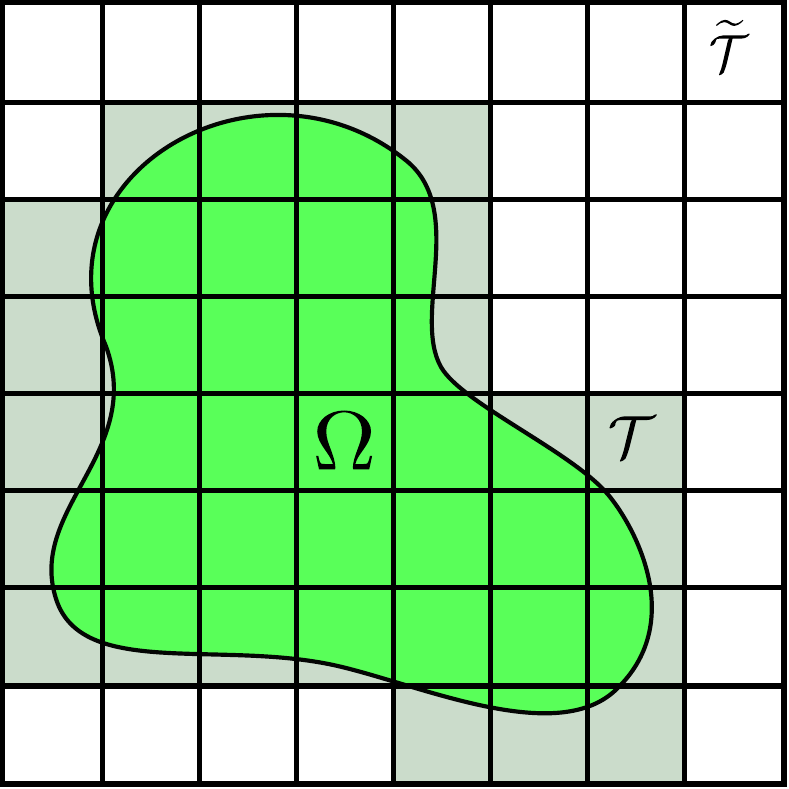}
    \caption{Example of a domain $\Omega$ with background mesh $\widetilde{\mT}$ and active mesh $\mT$.}
    \label{fig:domain-set-up}
  \end{center}
\end{figure}
On the active mesh $\mT$ we define the finite dimensional function space:
\begin{equation}
  V_h = \{ v \in C(\widetilde{\Omega}): v|_T \in  P_k(T), T \in \mT \},
\label{eq:Vh-def}
\end{equation}
consisting of continuous piecewise polynomials
of order $k$.
A simple prototype formulation for a Nitsche-based unfitted finite element
method is to find
$u_h \in V_h$, such that:
\begin{equation}
  \label{eq:formulation}
  a_h(u_h, v_h) = l_h(v_h) \quad \foralls v_h \in V_h,
\end{equation}
  where:
  \begin{align}
    a_h(u_h,v_h) & := \int_{\Omega} \nabla u_h \nabla v_h \dx 
      + \int_{\pO} (-\partial_n u_h) v_h \ds 
      + \int_{\pO} (-\partial_n v_h) u_h \ds
    + \int_{\pO} \lambda_T u_h v_h \ds,
    \\
    l_h(v_h) & := \int_{\Omega} f v_h \dx + \int_{\pO}
    (-\partial_n v_h + \lambda_T v_h) g \ds.
  \end{align}
Here, $\partial_n$ denotes the partial derivative in normal direction
of the boundary of $\Omega$ and $\lambda_T$ denotes a element-wise
stabilization parameter.
The choice of the stabilization parameter is crucial.  
As the review of the error analysis in Section~\ref{sec:method} reveals,
the local stabilization parameter $\lambda_T$ has to be chosen sufficiently
large in order to guarantee the discrete coercivity of the bilinear form $a_h(\cdot,\cdot)$.
A common -- see \emph{e.g.,} \cite{Ruess2013,Ruess2014,Schillinger2015} -- strategy is to solve the eigenvalue problem:
\begin{equation}
  \label{eq:evprob}\begin{aligned}
    &\hspace{1.5cm}\mbox{Find } (u_\mu,\mu_T) \in V_h|_T^0 \times \mathbb{R}, \mbox{ such that:}\\
    &\int_{\pO \cap T} (\partial_n u_\mu) (\partial_n v_h)
    \, ds = \mu_T \int_{\Omega \cap T} \nabla u_\mu \nabla v_h \, ds
    \quad \mbox{for all } v_h \in V_h|_T^0,
  \end{aligned}
\end{equation}
on each element $T$ which has a non-empty intersection with $\pO$,
see for instance \cite{Embar2010}.
Here, $V_h|_T^0$ is the space of functions in $V_h|_T$ that are
$L^2(T)$-orthogonal to constants.  Let $N_T = \mathrm{dim}(V_h|_T)$,
then \eqref{eq:evprob} is an ($N_T-1$)-dimensional eigenvalue problem
with only non-negative eigenvalues.
Motivated by the forthcoming stability analysis, 
a suitable choice of the stabilization parameter is to set:
\begin{equation}\label{eq:choice}
  \lambda_T = 2\max\{\mu_T\} = 2 \max_{u_h \in V_h|_T^0} \frac{\int_{\pO \cap T} (\partial_n u_h)^2 \, ds}{\int_{\Omega \cap T} |\nabla u_h|^2 \, dx} > 0.
\end{equation}

\begin{remark}
  With techniques as in \cite{warburtonhesthaven03}, it can be shown that 
  $\lambda_T$ scales with  $|T \cap \pO|_{d-1}/|T \cap \Omega|_{d}$.
  Thus, shape-regularity implies that
  for a Nitsche-based formulation on a fitted and shape-regular mesh
  $\lambda_T \sim |T \cap \pO|_{d-1}/|T \cap \Omega|_{d} \sim h^{-1}$. 
  In contrast, in an unfitted method, the ratio
  $|T \cap \pO|_{d-1}/|T \cap \Omega|_{d}$ 
  depends not only on the cut element size but also highly on the cut configuration (see Figure~\ref{fig:cutcases}).
  In particular, a \emph{sliver} cut can lead
  to arbitrarily large values of $\lambda_T$ on relatively large parts of the boundary, depending on the
  thickness of the sliver.
  In Section~\ref{sec:numex} it is
  demonstrated how cut elements of approximately this shape can
  result in poor discretization errors.
\end{remark}
\begin{remark}
To generate and solve the discrete linear system associated with
  problem~\eqref{eq:formulation} several issues have to be resolved,
  \emph{i.e.,} sufficiently accurate numerical integration on $T \cap \Omega$
  and $T \cap \pO$ has to be provided, \emph{e.g.,} \cite{Yang2012,Abedian2013,Duczek2015,Fries2015,Lehrenfeld2016integrate,Joulaian2016,Kudela2016,Thiagarajan2016,Stavrev2016,Fries2016}, and a particular tuning
  of the linear algebra solver might be necessary,
  \emph{e.g.,} \cite{Lehrenfeld2016precon,Prenter2016}.
  All these aspects are important, but in this work
  we focus only of the choice of $\lambda_T$ and its effect on the numerical solution,
  assuming that numerical integration
  can be performed sufficiently accurate and applying a direct solver to
  solve the linear systems exactly.
\end{remark}

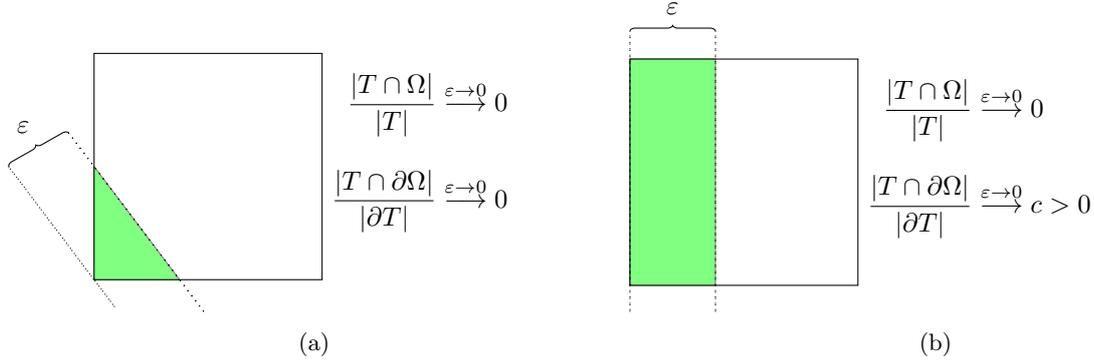
\begin{figure}
\begin{subfigure}[t]{.49\textwidth}
  \begin{tikzpicture}[scale=0.75]
    \begin{scope}
      \coordinate (V1) at (1,1); 
      \coordinate (V2) at (5,1);
      \coordinate (V3) at (5,5);
      \coordinate (V4) at (1,5);
      \coordinate (c1) at (2.5,1);
      \coordinate (c2) at (1,3);
      \coordinate (s1) at (-0.5,3);
      \coordinate (s2) at (1.375,0.5);

      \coordinate (t1) at ($ (c1) ! 1.3! (c2) $);
      \coordinate (t2) at ($ (c1) !-0.3! (c2) $);

      \draw [] (V2) -- (V3) -- (V4) -- (V1) --cycle; 
      \draw [fill=green,opacity=0.5] (V1) -- (c2) -- (c1) --cycle; 
      \draw [dotted] (s1) -- (s2) --cycle; 
      \draw [dotted] (t1) -- (t2) --cycle; 

      \draw [decorate,decoration={brace,amplitude=2pt},xshift=0pt,yshift=0pt]
      (s1) -- (t1) node [black,midway,yshift=0.3cm,xshift=-0.2cm]  {$\varepsilon$};

      \coordinate (Q) at (5,3.5); \node at (Q) [right] {
        \begin{minipage}{0.2\textwidth}
          \begin{align*}
            \frac{| T \cap \Omega|}{|T|} & \stackrel{\varepsilon \rightarrow 0}{\longrightarrow} 0 \\[2ex]
            \frac{| T \cap \pO|}{|\partial T|} & \stackrel{\varepsilon \rightarrow 0}{\longrightarrow} 0
          \end{align*}
        \end{minipage}
      };
    \end{scope}
  \end{tikzpicture}
  \caption{}
\end{subfigure}
\begin{subfigure}[t]{.49\textwidth}
  \begin{tikzpicture}[scale=0.75]
    \begin{scope}[shift={(9,0)}]
      \coordinate (V1) at (1,1); 
      \coordinate (V2) at (5,1);
      \coordinate (V3) at (5,5);
      \coordinate (V4) at (1,5);
      \coordinate (c1) at (2.5,1);
      \coordinate (c2) at (2.5,5);
      \coordinate (s1) at (1,5.5);
      \coordinate (s2) at (1,0.5);

      \coordinate (t1) at ($ (c1) ! 1.125! (c2) $);
      \coordinate (t2) at ($ (c1) !-0.125! (c2) $);

      \draw [] (V2) -- (V3) -- (V4) -- (V1) --cycle; 
      \draw [fill=green,opacity=0.5] (V1) -- (c1) -- (c2) -- (V4) --cycle; 
      \draw [dotted] (s1) -- (s2) --cycle; 
      \draw [dotted] (t1) -- (t2) --cycle; 

      \draw [decorate,decoration={brace,amplitude=2pt},xshift=0pt,yshift=0pt]
      (s1) -- (t1) node [black,midway,yshift=0.3cm]  {$\varepsilon$};

      \coordinate (Q) at (5,3.5); \node at (Q) [right] {
        \begin{minipage}{0.2\textwidth}
          \begin{align*}
            \frac{| T \cap \Omega|}{|T|} & \stackrel{\varepsilon \rightarrow 0}{\longrightarrow} 0 \\[2ex]
            \frac{| T \cap \pO|}{|\partial T|} & \stackrel{\varepsilon \rightarrow 0}{\longrightarrow} c > 0
          \end{align*}
        \end{minipage}
      };
    \end{scope}
  \end{tikzpicture}
  \caption{}
\end{subfigure}
  \caption{Different cut configurations. In (a) the part of the boundary in the element vanishes for vanishing volume, while for the \emph{sliver} case (b) the measure of the boundary
    part stays bounded from below as the volume goes to zero.} \label{fig:cutcases}
\end{figure}

\section{A simple error analysis} \label{sec:analysis}

In this section we perform an error analysis for the variational form in \eqref{eq:formulation}. We first define a mesh-dependent norm in which a best approximation property holds in Section~\ref{sec:energyNorm}. In Section~\ref{sec:approximability} we discuss the approximability in the mesh-dependent norm and the consequences for the discretization error in the $H^1(\Omega)$-norm.

\subsection{Best approximation in the mesh-dependent energy norm}\label{sec:energyNorm}

With the choice
of the stabilization parameter as in \eqref{eq:choice}, the stability
analysis is natural in the following mesh-dependent norm which we will
refer to as the \emph{energy norm}:
\begin{equation} \label{eq:enorm} \enorm{v}^2 := \Vert \nabla v
  \Vert_{\Omega}^2 + \Vert \lambda_T^{-\frac12} \partial_n v
  \Vert_{\pO}^2 + \Vert \lambda_T^{\frac12} v \Vert_{\partial
    \Omega}^2, \qquad v \in V_h \oplus
  H^2(\Omega).
\end{equation}
We note that the use of this norm in the error analysis is essentially
dictated by Nitsche's method and not by the model problem.  We obtain
coercivity and continuity in the following two lemmas.
\begin{lemma}[Coercivity]
  There holds:
  \begin{equation}
    a_h(u,u) \geq \frac15 \enorm{u}^2 \text{ for all } u \in V_h.
  \end{equation}
\end{lemma}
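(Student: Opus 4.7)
The plan is to prove coercivity by the standard Nitsche recipe: absorb the two consistency terms $\int_{\pO}(\partial_n u)u\,ds$ by means of Cauchy--Schwarz and Young's inequality, then turn the resulting $\partial_n u$ boundary term into a bulk gradient term using the element-wise inverse trace inequality built into the definition \eqref{eq:choice} of $\lambda_T$.

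First, expand $a_h(u,u) = \|\nabla u\|_{\Omega}^{2} - 2\int_{\pO}(\partial_n u)\,u\,ds + \|\lambda_T^{1/2}u\|_{\pO}^{2}$. I apply Cauchy--Schwarz element-wise on the boundary, inserting $\lambda_T^{1/2}\lambda_T^{-1/2}$, so that
\begin{equation*}
\Bigl|2\!\int_{\pO}(\partial_n u)u\,ds\Bigr|
\le 2\,\|\lambda_T^{-1/2}\partial_n u\|_{\pO}\,\|\lambda_T^{1/2}u\|_{\pO}
\le \epsilon\|\lambda_T^{-1/2}\partial_n u\|_{\pO}^{2}+\tfrac{1}{\epsilon}\|\lambda_T^{1/2}u\|_{\pO}^{2}
\end{equation*}
for any $\epsilon>0$ by Young's inequality. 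The crucial observation is then that \eqref{eq:choice} directly gives, on each cut element $T$, the bound $\int_{\pO\cap T}(\partial_n u)^{2}\,ds\le \tfrac{\lambda_T}{2}\int_{\Omega\cap T}|\nabla u|^{2}\,dx$; since the Rayleigh quotient in \eqref{eq:evprob} is constant-invariant, this extends from $V_h|_T^0$ to all of $V_h|_T$, hence after summing over $T$,
\begin{equation*}
\|\lambda_T^{-1/2}\partial_n u\|_{\pO}^{2}\le \tfrac12\|\nabla u\|_{\Omega}^{2}.
\end{equation*}

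Combining the two estimates yields $a_h(u,u)\ge(1-\tfrac{\epsilon}{2})\|\nabla u\|_{\Omega}^{2}+(1-\tfrac{1}{\epsilon})\|\lambda_T^{1/2}u\|_{\pO}^{2}$. To recover all three energy-norm contributions with a uniform constant $\tfrac15$, I will split the leading gradient term into two pieces $(\beta+\gamma)\|\nabla u\|_{\Omega}^{2}$, using $\gamma\|\nabla u\|_{\Omega}^{2}\ge 2\gamma\|\lambda_T^{-1/2}\partial_n u\|_{\pO}^{2}$ by the previous inverse estimate. Choosing $\epsilon=5/4$ gives $1-1/\epsilon=1/5$ and $1-\epsilon/2=3/8$; taking $\beta=1/5$ and $\gamma=7/40$ produces coefficients $\tfrac15$, $2\gamma=\tfrac{7}{20}\ge\tfrac15$, and $\tfrac15$ in front of the three norm pieces, which delivers exactly the claimed bound.

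The only subtle step is the legitimate use of the inverse trace inequality on the full space $V_h|_T$ rather than the constant-orthogonal subspace $V_h|_T^0$ on which the eigenvalue problem is posed; this is a short observation (both sides of the inequality are invariant under addition of constants) but it is essential, since the consistency term acts on an arbitrary element of $V_h$. The remainder is a bookkeeping exercise in choosing $\epsilon$ and the split to hit the symmetric constant $\tfrac15$.
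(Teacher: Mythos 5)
Your proof is correct and takes essentially the same route as the paper: expand $a_h(u,u)$, absorb the consistency terms by weighted Cauchy--Schwarz and Young's inequality, and use the eigenvalue characterization \eqref{eq:choice} (extended from $V_h|_T^0$ to $V_h|_T$ by constant-invariance, a point you rightly make explicit) to obtain $\Vert \lambda_T^{-\frac12}\partial_n u \Vert_{\pO}^2 \leq \tfrac12 \Vert \nabla u \Vert_{\Omega}^2$ and then split off part of the gradient term to recover the normal-derivative piece of the energy norm. The only difference is bookkeeping: the paper equalizes all three coefficients by solving $1-\gamma = 2\gamma - \tfrac{1}{\gamma}$, giving $c = 1 - \tfrac{1+\sqrt{13}}{6} > \tfrac15$, while your choice $\epsilon = \tfrac54$ with the split $\beta = \tfrac15$, $\gamma = \tfrac{7}{40}$ hits the constant $\tfrac15$ directly.
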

\begin{proof}
  Exploiting the eigenvalue characterization in the definition of
  $\lambda_T$ we obtain for arbitrary $\gamma > 0$:
  \begin{equation}
    \begin{aligned}
      a_h(u,u) & \geq \Vert \nabla u \Vert_{\Omega}^2 + \Vert \lambda_T^{\frac12} u \Vert_{\pO}^2 - 2 \int_{\pO} \left| \lambda_T^{-\frac12} \partial_n u \right| \, \left| \lambda_T^{\frac12} u \right| \, ds \\
      & \geq \Vert \nabla u \Vert_{\Omega}^2 + (1- \gamma) \Vert \lambda_T^{\frac12} u \Vert_{\pO}^2 -  \frac{1}{\gamma} \Vert \lambda_T^{-\frac12} \partial_n u \Vert_{\pO}^2 \\
      & \geq (1-\gamma) \Vert \nabla u \Vert_{\Omega}^2 + (1 - \gamma)
      \Vert \lambda_T^{\frac12} u \Vert_{\pO}^2 + (2
      \gamma - \frac{1}{\gamma}) \Vert \lambda_T^{-\frac12} \partial_n
      u \Vert_{\pO}^2 \geq c \enorm{u}^2,
    \end{aligned}
  \end{equation}
  with $c = 1 - \gamma^\ast$ for $\gamma^\ast>0$ such that
  $1-\gamma^\ast = 2 \gamma^\ast - \frac{1}{\gamma^\ast}$, which has
  only the positive solution $\gamma^\ast = \frac{1 + \sqrt{13}}{6}$
  so that $c > \frac15$.
\end{proof}
\begin{lemma}[Continuity]
  There holds:
  \begin{equation}
    a_h(u,v) \leq 2 \enorm{u} \enorm{v} \text{ for all } u,v \in V_h \oplus H^2(\Omega).
  \end{equation}
\end{lemma}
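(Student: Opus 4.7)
The plan is to bound each of the four terms in $a_h(u,v)$ by a Cauchy--Schwarz estimate, pairing each factor with the unique norm component under which it is controlled by $\enorm{\cdot}$, and then to combine the four resulting products with one more discrete Cauchy--Schwarz in $\mathbb{R}^4$.

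Concretely, I would first estimate the bulk term by $\int_{\Omega} \nabla u \cdot \nabla v \, dx \leq \|\nabla u\|_{\Omega}\,\|\nabla v\|_{\Omega}$. The two consistency terms I would rewrite with the weight $\lambda_T^{1/2}$ inserted and removed: for instance,
\begin{equation*}
  \int_{\pO} (-\partial_n u)\, v \, ds
  \;=\; \int_{\pO} (\lambda_T^{-1/2}\partial_n u)\,(\lambda_T^{1/2} v)\, ds
  \;\leq\; \|\lambda_T^{-1/2}\partial_n u\|_{\pO}\,\|\lambda_T^{1/2} v\|_{\pO},
\end{equation*}
and symmetrically for the other consistency term. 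The penalty term is handled by splitting $\lambda_T = \lambda_T^{1/2}\cdot \lambda_T^{1/2}$ and applying Cauchy--Schwarz directly, yielding $\|\lambda_T^{1/2} u\|_{\pO}\,\|\lambda_T^{1/2} v\|_{\pO}$. At this point each of the four products is of the form $A_i B_j$ with $A_i, B_j$ being components of $\enorm{u}$ and $\enorm{v}$ respectively. Since $u,v \in V_h \oplus H^2(\Omega)$, all boundary traces and normal derivatives are well-defined (either piecewise for $V_h$ or in the trace sense for $H^2(\Omega)$), so no inverse or trace inequalities enter here; the energy norm is exactly tailored to make these estimates hold.

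The remaining step is the combinatorial one. Writing $A_1 = \|\nabla u\|_{\Omega}$, $A_2 = \|\lambda_T^{-1/2}\partial_n u\|_{\pO}$, $A_3 = \|\lambda_T^{1/2} u\|_{\pO}$, and similarly $B_i$ for $v$, the above gives
\begin{equation*}
  a_h(u,v) \;\leq\; A_1 B_1 + A_2 B_3 + A_3 B_2 + A_3 B_3.
\end{equation*}
I would then apply Cauchy--Schwarz in $\mathbb{R}^4$ to the vectors $(A_1, A_2, A_3, A_3)$ and $(B_1, B_3, B_2, B_3)$, obtaining the bound $\sqrt{A_1^2 + A_2^2 + 2A_3^2}\,\sqrt{B_1^2 + B_2^2 + 2B_3^2}$. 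Using $A_1^2 + A_2^2 + 2A_3^2 \leq 2(A_1^2 + A_2^2 + A_3^2) = 2\,\enorm{u}^2$ (and analogously for $v$) then produces the factor of $2$ and completes the estimate.

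The only place where one has to be slightly careful is the doubling of the penalty factor $A_3$: the penalty term and the ``adjoint consistency'' term both contribute an $A_3$ in the first factor, so naively one would obtain a constant of $\sqrt{2}\sqrt{2}=2$ only after the final Cauchy--Schwarz rearrangement. This is not really a difficulty, just a bookkeeping point, and it is the only reason the constant is $2$ rather than something smaller. Overall I expect no essential obstacle: the continuity lemma is structural and follows from the definition of the energy norm, independently of $\lambda_T$, of shape regularity, or of the cut configuration.
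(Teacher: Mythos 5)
Your proof is correct and follows essentially the same route as the paper: the identical weighted Cauchy--Schwarz estimate on each of the four terms of $a_h(u,v)$, followed by a discrete Cauchy--Schwarz to assemble the products with the constant $2$. The only (immaterial) difference is the final bookkeeping step: the paper first bounds the three boundary products by the factorization $\bigl(\Vert \lambda_T^{-1/2}\partial_n u\Vert_{\pO}+\Vert\lambda_T^{1/2}u\Vert_{\pO}\bigr)\bigl(\Vert \lambda_T^{-1/2}\partial_n v\Vert_{\pO}+\Vert\lambda_T^{1/2}v\Vert_{\pO}\bigr)$ and then uses $(a+b)^2\leq 2a^2+2b^2$, whereas you apply a single Cauchy--Schwarz in $\mathbb{R}^4$ with the repeated entry $\Vert\lambda_T^{1/2}u\Vert_{\pO}$ --- both yield the stated bound $2\enorm{u}\enorm{v}$.
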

\begin{proof}
  With Cauchy-Schwarz we directly obtain:
  \begin{equation}
    \begin{aligned}
      \ \ & \!\!\!\! a_h(u,v) \!\leq\! \Vert \nabla u \Vert_{\Omega}
      \Vert \nabla v \Vert_{\Omega} \!+\!\Vert \lambda_T^{-\frac12}
      \!\partial_n u \Vert_{\pO} \Vert \lambda_T^{\frac12}
      v \Vert_{\pO} \!+\!\Vert \lambda_T^{-\frac12}
      \!\partial_n v \Vert_{\pO} \Vert \lambda_T^{\frac12}
      u \Vert_{\pO}
      \!+\! \Vert \lambda_T^{\frac12} u \Vert_{\pO} \Vert \lambda_T^{\frac12} v \Vert_{\pO} \\
      & \!\leq\! \Vert \nabla u \Vert_{\Omega} \Vert \nabla v
      \Vert_{\Omega}
      \!+\! \left( \Vert \lambda_T^{-\frac12} \!\partial_n u \Vert_{\pO}  + \Vert \lambda_T^{\frac12} u \Vert_{\pO} \right) \left( \Vert \lambda_T^{-\frac12} \!\partial_n v \Vert_{\pO}  + \Vert \lambda_T^{\frac12} v \Vert_{\pO} \right) \\
      & \leq \left( \Vert \nabla u \Vert_{\Omega}^2 + \big( \Vert
        \lambda_T^{-\frac12} \!\partial_n u \Vert_{\pO} +
        \Vert \lambda_T^{\frac12} u \Vert_{\pO} \big)^2
      \right)^{\frac12} \left( \Vert \nabla v
        \Vert_{\Omega}^2 +
        \big( \Vert \lambda_T^{-\frac12} \!\partial_n v \Vert_{\pO}  + \Vert \lambda_T^{\frac12} v \Vert_{\pO} \big)^2 \right)^{\frac12} \\
      & \leq \left( \Vert \nabla u \Vert_{\Omega}^2 + 2 \Vert
        \lambda_T^{-\frac12} \!\partial_n u \Vert_{\pO}^2
        + 2 \Vert \lambda_T^{\frac12} u \Vert_{\pO}^2
      \right)^{\frac12} \left( \Vert \nabla v
        \Vert_{\Omega}^2 + 2 \Vert \lambda_T^{-\frac12} \!\partial_n v
        \Vert_{\pO}^2 + 2 \Vert \lambda_T^{\frac12} v
        \Vert_{\pO}^2 \right)^{\frac12}.
    \end{aligned}
  \end{equation}
\end{proof}
Applying these lemmas and exploiting that the Nitsche formulation is
consistent in the sense of Galerkin orthogonality, we obtain C\'ea's
Lemma \cite{CEA}:
\begin{lemma}[C\'ea's Lemma]
  Let $u \in H^2(\Omega)$ be the solution to \eqref{eq:ellmodel} and
  $u_h \in V_h$ be the solution to
  \eqref{eq:formulation}. Then there holds:
  \begin{equation}\label{eq:cea}
    \enorm{u-u_h} \leq 11 \inf_{v_h \in V_h} \enorm{u-v_h}.
  \end{equation}
\end{lemma}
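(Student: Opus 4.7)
The plan is to run the standard C\'ea argument, combining the two lemmas just proved with a consistency/Galerkin-orthogonality property of the Nitsche formulation. The only ingredients really needed are coercivity (constant $> \tfrac15$ on $V_h$), continuity (constant $\leq 2$ on the larger space $V_h \oplus H^2(\Omega)$), and the identity $a_h(u - u_h, v_h) = 0$ for every $v_h \in V_h$. The constant $11$ is then a direct arithmetic consequence of the two previous constants, so no sharp tracking of hidden factors is required.

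First I would verify Galerkin orthogonality. Since $u \in H^2(\Omega)$ is a strong solution of \eqref{eq:ellmodel}, an integration by parts against a test $v_h \in V_h$ gives
\begin{equation*}
\int_{\Omega} \nabla u \cdot \nabla v_h \dx + \int_{\pO}(-\partial_n u)\, v_h \ds = \int_{\Omega} f v_h \dx.
\end{equation*}
Adding the two symmetric Nitsche boundary terms $\int_{\pO}(-\partial_n v_h) u \ds$ and $\int_{\pO}\lambda_T u v_h \ds$ to both sides, and using $u = g$ on $\pO$ on the right, shows $a_h(u,v_h) = l_h(v_h)$. Subtracting \eqref{eq:formulation} yields $a_h(u-u_h,v_h) = 0$ for all $v_h \in V_h$. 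Here it is important that $u \in H^2(\Omega)$ so that $\partial_n u$ makes sense in $L^2(\pO)$ and $\enorm{u-v_h}$ is finite; this is precisely why the energy norm was defined on $V_h \oplus H^2(\Omega)$.

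Next I would carry out the classical triangle-inequality step. For an arbitrary $v_h \in V_h$, set $w_h := u_h - v_h \in V_h$. Coercivity on $V_h$ gives $\tfrac15 \enorm{w_h}^2 \leq a_h(w_h,w_h)$. Writing $w_h = (u_h - u) + (u - v_h)$, Galerkin orthogonality kills the first term and continuity bounds the second, so
\begin{equation*}
\tfrac15 \enorm{w_h}^2 \leq a_h(u-v_h, w_h) \leq 2 \enorm{u-v_h}\enorm{w_h},
\end{equation*}
hence $\enorm{u_h - v_h} \leq 10\, \enorm{u - v_h}$. A triangle inequality then gives $\enorm{u - u_h} \leq (1+10)\enorm{u-v_h} = 11\enorm{u-v_h}$, and taking the infimum over $v_h \in V_h$ finishes the proof.

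The only delicate point, and the one I would spend the most care on, is the consistency step: coercivity was stated on $V_h$ only, while continuity was stated on $V_h \oplus H^2(\Omega)$, so the error $u - u_h$ must be split as above and continuity must be applied to $u-v_h$, not directly to $u - u_h$. Everything else is algebra with the constants $\tfrac15$ and $2$ already delivered by the previous two lemmas.
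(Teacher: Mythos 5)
Your proof is correct and follows essentially the same route as the paper: coercivity applied to the discrete difference $u_h - v_h$, Galerkin orthogonality to swap in $u - v_h$, continuity with constant $2$, and a triangle inequality yielding $1 + 5\cdot 2 = 11$. The only difference is that you verify the consistency identity $a_h(u,v_h)=l_h(v_h)$ by integration by parts, which the paper simply asserts; that is a harmless (indeed welcome) addition.
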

\begin{proof}
  For an arbitrary $v_h \in V_h$ there holds
  $ \enorm{u-u_h} \leq \enorm{u-v_h} + \enorm{v_h-u_h} $ and:
  \begin{equation}
    \enorm{v_h-u_h}^2 \leq 5 a_h(v_h-u_h,v_h-u_h) = 5 a_h(v_h-u,v_h-u_h) \leq 10 \enorm{v_h-u_h} \enorm{v_h-u},
  \end{equation}
  s.t.\ the claim directly follows.
\end{proof}
We note that in the proof we could not use coercivity for $u-u_h$, but
only for discrete functions. To this end we applied the triangle
inequality and were able to apply the coercivity estimate on the
discrete function $v_h-u_h$, which was then combined with Galerkin
orthogonality and continuity in the usual way.

This version of C\'ea's lemma suggests that Nitsche's method provides
a robust and quasi-optimal method.  The best approximation is,
however, only provided in the mesh-dependent norm
$\enorm{\cdot}$. In the next paragraph we discuss the discretization error of the best
approximation in this norm.

\subsection{Approximability of the solution in the mesh-dependent energy norm}\label{sec:approximability}
We consider the approximability of the solution $u$ with the finite element space $V_h$ in the energy norm, \emph{i.e.,} $\inf_{v_h \in V_h} \enorm{u -v_h}$, which is the r.h.s.\ in \eqref{eq:cea}.
The aim of this paragraph is to show that the approximability in the discrete energy norm is not robust with respect to the position of the domain boundary.
To this end, we split the energy norm, \emph{cf.}\ \eqref{eq:enorm}, into two parts with one part that is robust and one part which is not robust:
\begin{equation} \label{eq:enormest}
\inf_{v_h \in V_h} \!\! \enorm{u - v_h}^2
\! \geq
\inf_{v_h \in V_h} \!\!\left\{ \Vert \nabla (u - v_h) \Vert_{\Omega}^2 + \Vert \lambda_T^{-\frac12} \partial_n (u-v_h)\Vert_{\pO}^2  \right\}
+\!
\inf_{v_h \in V_h} \!\!\Vert \lambda_T^{\frac12}(u-v_h)\Vert_{\pO}^2.
\end{equation}
We assume that the solution $u$ is smooth, $u \in H^{k+1}(\Omega)$, with $k \geq 1$ denoting the order of the discretization. In the sequel we use the notation $\gtrsim$ for an inequality with a constant that is independent of the cut configuration.
For the first part of the r.h.s.\ in \eqref{eq:enormest} we take the idea from \cite{hansbo2002unfitted} to show that the solution can be optimally approximated in $V_h$ with respect to these parts of the norm independent of the cut position. Afterwards, we show that the remaining part is the crucial problem which results in the missing robustness. 

Let $\tilde{u} \in H^{k+1}(\widetilde{\Omega})$ be a continuous extension of $u$ to $\widetilde{\Omega}$ (the embedding domain, \emph{i.e.,} the union of all active elements), then we have:
$$
\Vert \nabla (u - v_h) \Vert_{\Omega} = \Vert \nabla (\tilde{u} - v_h) \Vert_{\Omega} \lesssim \Vert \nabla (\tilde{u} - v_h) \Vert_{\widetilde{\Omega}}, \quad \foralls v_h \in V_h.
$$
To bound the latter part we can apply a standard best approximation result for $V_h$ on the \emph{fitted} mesh $\widetilde{\mT}$. This estimate is robust in the position of the boundary, \emph{i.e.,} there is a $v_h \in V_h$ s.t. $\Vert \nabla(u - v_h) \Vert_{\Omega} \lesssim h^k \Vert \tilde{u} \Vert_{H^{k+1}(\widetilde{\Omega})} \lesssim h^k \Vert u \Vert_{H^{k+1}(\Omega)}$. 
Further, due to $\lambda_T^{-1} \lesssim h_T$ and a trace inequality, we have:
$$
\Vert \lambda_T^{-\frac12} \partial_n v \Vert_{\pO \cap T} \lesssim \Vert v \Vert_{H^1(\Omega \cap T)} + h_T \Vert v \Vert_{H^2(\Omega \cap T)}, \quad \foralls v \in H^2(\Omega \cap T).
$$
Applying this to $u-v_h$ gives similar results (independent of the position of the boundary),
hence $\Vert \lambda_T^{-\frac12} \partial_n (u-v_h) \Vert_\pO \lesssim h^k \Vert u \Vert_{H^{k+1}(\Omega)}$.

We now consider the second term in \eqref{eq:enormest} which is the crucial problem for the approximation of $u$ in the energy norm:
\begin{equation*}
  \Vert \lambda_T^{\frac12} \left( u - v_h \right) \Vert_{\pO}^2
  =
  \sum_{T \in \mT} \lambda_T \Vert u - v_h \Vert_{T \cap \pO}^2.
\end{equation*}
Let $h_{\Omega \cap T} := {\mathrm{diam}}(\Omega \cap T)$ and $h_{\pO \cap T} := {\mathrm{diam}}(\pO \cap T)$ be the characteristic lengths of the cut elements and their parts on the boundary. Furthermore, let $\tilde{v}_h$ be the best approximation of $u$ in $V_h$ in the energy norm. 
We have that $\Vert u - \tilde{v}_h \Vert_{T \cap \pO}^2$ is typically not zero and scales with $h_{\pO \cap T}^{2k+2+(d-1)}$, $d-1$ denoting the dimension of the boundary.
In the good case, if the \emph{cut element} $T \cap \Omega$ is shape regular, there holds $h_{\pO \cap T} \sim h_{\Omega \cap T} \sim \lambda_T^{-1}$ such that we would have $\lambda_T \Vert u - \tilde{v}_h \Vert_{T \cap \pO}^2 \leq c h_{\pO \cap T}^{2k+1+(d-1)} \leq c h_T^{2k+d}$ for some (bounded) constant $c > 0$ which is a robust bound for the element contribution of $T$.
In the bad case, if the \emph{cut element} is not shape regular however, such as for a sliver cut element in Figure~\ref{fig:cutcases}, it can occur that $\lambda_T \gg h_{\pO\cap T}^{-1}$, such that the contribution of a single element $\lambda_T \Vert u - \tilde{v}_h \Vert_{T \cap \pO}^2$ becomes unbounded.

We conclude that the last term in \eqref{eq:enormest} is unbounded and thus the same holds for the left hand side. Therefore the discretization error in the mesh-dependent energy norm, regardless of the best approximation property, can not be bounded from above. This missing robustness is essentially due to the possible existence of unfortunate cut configurations such as sliver cuts, which introduce values of the penalization parameter, $\lambda_T$, that are not bounded in terms of the reciprocal of the length of the boundary intersecting element $T$, $h_{\pO\cap T}$. 

The energy norm is very natural with respect to the (symmetric) Nitsche formulation.
Nevertheless, one is typically not interested in this norm but rather in the $H^1(\Omega)$-norm of the error.
To obtain a priori estimates for the error in the $H^1(\Omega)$-norm we
use a Poincar\'e-type inequality and combine it with the result in C\'ea's lemma:
\begin{equation} \label{eq:ceaagain} \Vert {u-u_h} \Vert_{H^1(\Omega)}
  \lesssim \enorm{u-u_h} \lesssim \inf_{v_h \in V_h}
  \enorm{u-v_h}.
\end{equation}
\renewcommand{\thefootnote}{$\dagger$}
Whilst the approximability in the energy norm is not robust with respect to the boundary position, these inequalities are independent of the boundary position\footnotemark. Therefore we can not give an optimal a priori error estimate for the $H^1(\Omega)$-norm.
\footnotetext{By \emph{e.g.,} Lemma B.63 in \cite{ErnGuermond} we have $\enorm{u}^2 \geq \Vert \nabla u \Vert_{\Omega}^2 +  \Vert \lambda_T^{\frac12} u \Vert_{\pO}^2 \geq \Vert \nabla u \Vert_{\Omega}^2 + \tilde{\lambda} \Vert u \Vert_{\pO}^2 \geq \delta^2 \Vert v\Vert_{H^1(\Omega)}^2$, with $\tilde{\lambda} \sim 1/h$ the lower bound for $\lambda_T$.}

Besides showing that the traditional method to show optimality does not hold for Nitsche's method, we can also give an intuitive derivation of why Nitche's method can yield bad approximations in the $H^1(\Omega)$-norm. To do this, we compare the energy and Sobolev norms. At first sight these norms are not even equivalent, as $\Vert \lambda^{-\frac12} \partial_n v \Vert_{\pO}$ is not bounded by the $H^1(\Omega)$-norm. This holds for all (even unsymmetric or fitted) Nitsche-type formulations however, and is not related to the problem of unbounded stabilization parameters we are targeting here. When we restrict ourselves to the space $W_h = V_h \oplus \langle u \rangle$, both norms are equivalent:
\begin{equation}\label{eq:normeq}
c \Vert v_h \Vert_{H^1(\Omega)} \leq \enorm{v_h} \leq C \Vert v_h \Vert_{H^1(\Omega)}, \quad \forall v_h \in W_h.
\end{equation}
To investigate the strength of this equivalence, we examine the constants $c$ and $C$ in \eqref{eq:normeq}. The first constant $0 < \delta \footnotemark < c < 1$ is bounded from below and above independent of $u$ or the cut configuration.
The upper bound of $c$ follows from:
\begin{equation}
c = \min_{v_h \in W_h} \frac{\enorm{v_h}}{\Vert v_h\Vert_{H^1(\Omega)}} \lesssim \min_{v_h \in \left. V_h \right|_{\pO}^\perp} \frac{\enorm{v_h}}{\Vert v_h\Vert_{H^1(\Omega)}} = \min_{v_h \in \left. V_h \right|_{\pO}^\perp} \frac{ | v_h |_{H^1}}{\Vert v_h\Vert_{H^1(\Omega)}} < 1,
\end{equation}
with $\left. V_h \right|_{\pO}^\perp$ the restriction of $V_h$ to functions that do not intersect the boundary, \emph{i.e.,}
$$
\left. V_h \right|_{\pO}^\perp= \left\{ v_h \in V_h | v_h = \partial_n v_h  = 0 { \ \mathrm{on}\ } \pO \right\}.
$$
The second constant in \eqref{eq:normeq}, $C$, does depend on the cut configuration:
\begin{equation}
C = \max_{v_h \in W_h} \frac{\enorm{v_h}}{\Vert v_h\Vert_{H^1(\Omega)}} \geq \max_{v_h \in V_h} \frac{\enorm{v_h}}{\Vert v_h\Vert_{H^1(\Omega)}}.
\label{eq:Cmax}
\end{equation}
Due to the finite dimensionality of $W_h$ we have $C<\infty$, but, in contrast to $c$, $C$ clearly depends on $\lambda$ and therefore on the cut configuration. The inequality in \eqref{eq:Cmax} disregards the possibility that $\Vert \lambda_T^{-\frac12} v_h \Vert_{\pO} \gg \Vert v_h \Vert_{H^1(\Omega)}$ for $v_h \in W_h$ due to possibly large normal derivatives of solution $u$, \emph{i.e.,} the general limitation for Nitsche-type methods we mentioned before. But also when this is not the case, $C$ will be large for large values of $\lambda$ however. Therefore, depending on the cut configuration, the equivalence between the energy norm and the $H^1(\Omega)$-norm can degenerate, such that the projections in the different norms can deviate from each other. With the solution to Nitsche's method closely resembling the projection in the energy norm, this may deviate substantially from an optimal approximation in $H^1(\Omega)$.

\begin{remark}
Using $c$ and $C$ we can also extend \eqref{eq:ceaagain}:
\begin{equation}
\Vert u_h - v_h\Vert_{H^1(\Omega)} \leq \frac{1}{c} \enorm{u_h - v_h} \leq 10 \frac{1}{c} \enorm{u - v_h} \leq 10 \frac{C}{c} \Vert u - v_h\Vert_{H^1(\Omega)}, \quad \forall v_h \in V_h,
\end{equation}
such that:
\begin{equation}
\Vert u - u_h\Vert_{H^1(\Omega)} \leq \left( 10 \frac{C}{c} + 1 \right) \inf_{v_h \in V_h} \Vert u - v_h\Vert_{H^1(\Omega)}.
\end{equation}
This seems like a satisfying result, but note that $C/c \gg 1$ when $\lambda \gg 1$.
\end{remark}

\section{Numerical examples}\label{sec:numex}

In this section we will demonstrate some examples of situations where
the previously discussed discretization leads to large values for the
stabilization parameter. The aim of this section is to illustrate the
problem described in Section~\ref{sec:analysis} and to show that it is
not just a problem in the a priori analysis, but really yields large
discretization errors in practice.

The numerical integration on elements that only partially intersect the domain is performed using the bisection-based tessellation scheme proposed in \cite{Verhoosel2015} with a maximal refinement depth of two. This leads to an exact representation of the domain for the first three examples and a reasonable good geometric approximation for the other examples. To make sure the results of these last examples are not affected by geometrical errors, we adapt the boundary conditions to the approximated geometry. On the approximated geometry we apply Gauss quadrature such that functions of order $2k+1$ are integrated exactly, in order to achieve an accurate integration of the  force terms and the boundary conditions.

\subsection{Example 1: An overlapping square on a triangular and quadrilateral mesh}
The simplest example of a discretization with \emph{sliver} cuts as
described in the Section~\ref{sec:method} is the squared domain
$\Omega = (-1-\varepsilon,1+\varepsilon)^2$ with
$0 < \varepsilon \ll 1$ on a structured grid that aligns with $\Omega$
for $\varepsilon = 0$. The domain and the grid are shown in
Figure~\ref{fig:testcase1mesh} (for the triangular mesh), in which it is clearly visible that the
cut elements become of arbitrarily large aspect ratios for
$\varepsilon \rightarrow 0$. 

\subsubsection*{Triangular grid}
We use a \emph{triangular} mesh of isosceles right triangles with legs of length $h = 1/K$ with $K=16$ and continuous piecewise linear basis functions.  
We apply an element local stabilization parameter by the procedure described in
Section~\ref{sec:method}. We set up the problem such that the
analytical solution equals $u = \sin(\pi x) + \sin(\pi y)$ and impose
Dirichlet conditions through the previously discussed Nitsche method
on all boundaries.

\begin{figure}[h!]
  \begin{center}
    \begin{subfigure}[t]{.49\textwidth}
      \begin{center}
        \begin{tikzpicture}
          [ spy using outlines={rectangle, inner sep=0pt, width=1.8cm,
            height=5.5cm, very thick,black,magnification=3.5, connect
            spies,opacity=1.0}, scale=0.5 ] \def\dw{1} \def\dr{10}
          \def\da{0.92} 
          \def\db{1.1} 
          \def\dc{3} 

          \draw[ultra thin,red] (\da*\dw,-0.4*\dw) -- (\da*\dw,10.4*\dw);
          \draw[ultra thin,red] (\dw,-0.4*\dw) -- (\dw,10.4*\dw);

          \draw[ultra thin,draw=black,fill=green,opacity=0.65]
          (10*\dw-\da*\dw,10*\dw-\da*\dw)
          -- (10*\dw-\da*\dw,\da*\dw)
          -- (\da*\dw,\da*\dw)
          -- (\da*\dw,10*\dw-\da*\dw)
          -- cycle ;

          \fill[red,opacity=0.2] (\da*\dw,-0.4*\dw) -- (\da*\dw,10.4*\dw)
          -- (\dw,10.4*\dw) -- (\dw,-0.4*\dw); \draw [draw=none]
          (\da*\dw,-0.4*\dw) -- (\dw,-0.4*\dw) node
          [pos=0.5,anchor=north,yshift=0.03cm,scale=0.8] {$\varepsilon$};

          \foreach \i in {0,...,9} { \foreach \j in {0,...,9} { \draw
              [ultra thin,opacity=1.0] (\i*\dw,\j*\dw) --
              (\i*\dw+\dw,\j*\dw+\dw); \draw [ultra thin,opacity=1.0]
              (\i*\dw+\dw,\j*\dw) -- (\i*\dw,\j*\dw+\dw); \draw [ultra
              thin,opacity=1.0] (\i*\dw,\j*\dw+0.5*\dw) --
              (\i*\dw+\dw,\j*\dw+0.5*\dw); \draw [ultra thin,opacity=1.0]
              (\i*\dw+0.5*\dw,\j*\dw) -- (\i*\dw+0.5*\dw,\j*\dw+\dw); } }
          \foreach \i in {0,...,9} { \foreach \j in {0,...,10} { \draw
              [ultra thin,opacity=1.0] (\i*\dw,\j*\dw) --
              (\i*\dw+\dw,\j*\dw); } } \foreach \i in {0,...,10} {
            \foreach \j in {0,...,9} { \draw [ultra thin,opacity=1.0]
              (\i*\dw,\j*\dw) -- (\i*\dw,\j*\dw+\dw); } }

          \coordinate (spyat) at (0.8*\dw,0.6*\dw); \coordinate
          (showspyat) at (-2.3*\dw,4.5*\dw); \spy on (spyat) in node
          [fill=white] at (showspyat);
        \end{tikzpicture}
      \end{center}
      \caption{Mesh and domain $\Omega$.}
      \label{fig:testcase1mesh}
    \end{subfigure}
    \hfill
    \begin{subfigure}[t]{.45\textwidth}
      \begin{center}
        \includegraphics[trim=4mm 4mm 4mm 2mm, width=0.995\textwidth]{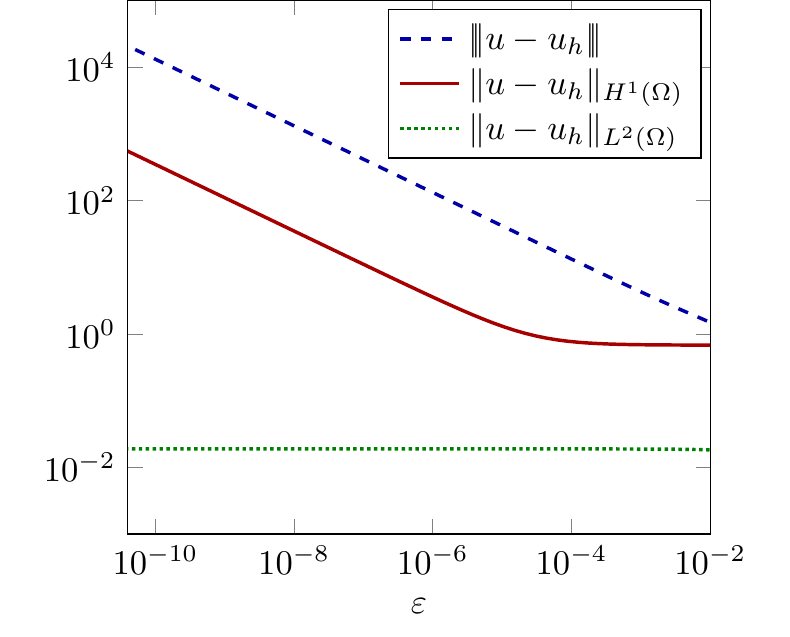} \\
      \end{center}
      \caption{Discretization errors.}
      \label{fig:testcase1results}
    \end{subfigure}
  \end{center}
  \vspace*{-0.6cm}
  \caption{Example 1 with a triangular grid.}
  \vspace*{-0.1cm}
  \label{fig:testcase1}
\end{figure}

The resulting discretization errors in the mesh-dependent energy norm,
the $H^1(\Omega)$-norm and the $L^2(\Omega)$-norm are given in
Figure~\ref{fig:testcase1results}. From the results it is clearly visible that
for $\varepsilon \rightarrow 0$ the discretization error degenerates in both
the energy as the $H^1(\Omega)$-norm.  

\subsubsection*{Quadrilateral grid}
We repeat the example on a \emph{quadrilateral} grid with the same nodal positions as the triangular grid, \emph{i.e.,} the mesh size $h=1/K$ with $K=16$, see Figure~\ref{fig:testcase2mesh}.
Piecewise bilinear elements are used which lead to the same number of degrees of freedom. The results are shown in Figure~\ref{fig:testcase2results}, from which we can observe the same tendencies for the convergence $\varepsilon \rightarrow 0$. 

\begin{figure}[h!]
  \begin{center}
    \begin{subfigure}[t]{.49\textwidth}
      \begin{center}
        \begin{tikzpicture}
          [ spy using outlines={rectangle, inner sep=0pt, width=1.8cm,
            height=5.5cm, very thick,black,magnification=3.5, connect
            spies,opacity=1.0}, scale=0.5 ] \def\dw{1} \def\dr{10}
          \def\da{0.92} 
          \def\db{1.1} 
          \def\dc{3} 

          \draw[ultra thin,red] (\da*\dw,-0.4*\dw) -- (\da*\dw,10.4*\dw);
          \draw[ultra thin,red] (\dw,-0.4*\dw) -- (\dw,10.4*\dw);

          \draw[ultra thin,draw=black,fill=green,opacity=0.65]
          (10*\dw-\da*\dw,10*\dw-\da*\dw)
          -- (10*\dw-\da*\dw,\da*\dw)
          -- (\da*\dw,\da*\dw)
          -- (\da*\dw,10*\dw-\da*\dw)
          -- cycle ;

          \fill[red,opacity=0.2] (\da*\dw,-0.4*\dw) -- (\da*\dw,10.4*\dw)
          -- (\dw,10.4*\dw) -- (\dw,-0.4*\dw); \draw [draw=none]
          (\da*\dw,-0.4*\dw) -- (\dw,-0.4*\dw) node
          [pos=0.5,anchor=north,yshift=0.03cm,scale=0.8] {$\varepsilon$};

          \foreach \i in {0,...,9} { \foreach \j in {0,...,9} {
              \draw [ultra thin,opacity=1.0] (\i*\dw,\j*\dw+0.5*\dw) --
              (\i*\dw+\dw,\j*\dw+0.5*\dw); \draw [ultra thin,opacity=1.0]
              (\i*\dw+0.5*\dw,\j*\dw) -- (\i*\dw+0.5*\dw,\j*\dw+\dw); } }
          \foreach \i in {0,...,9} { \foreach \j in {0,...,10} { \draw
              [ultra thin,opacity=1.0] (\i*\dw,\j*\dw) --
              (\i*\dw+\dw,\j*\dw); } } \foreach \i in {0,...,10} {
            \foreach \j in {0,...,9} { \draw [ultra thin,opacity=1.0]
              (\i*\dw,\j*\dw) -- (\i*\dw,\j*\dw+\dw); } }

          \coordinate (spyat) at (0.8*\dw,0.6*\dw); \coordinate
          (showspyat) at (-2.3*\dw,4.5*\dw); \spy on (spyat) in node
          [fill=white] at (showspyat);
        \end{tikzpicture}
      \end{center}
      \caption{Mesh and domain $\Omega$.}
      \label{fig:testcase2mesh}
    \end{subfigure}
    \hfill
    \begin{subfigure}[t]{.45\textwidth}
      \begin{center}
        \includegraphics[trim=4mm 4mm 4mm 2mm, width=0.995\textwidth]{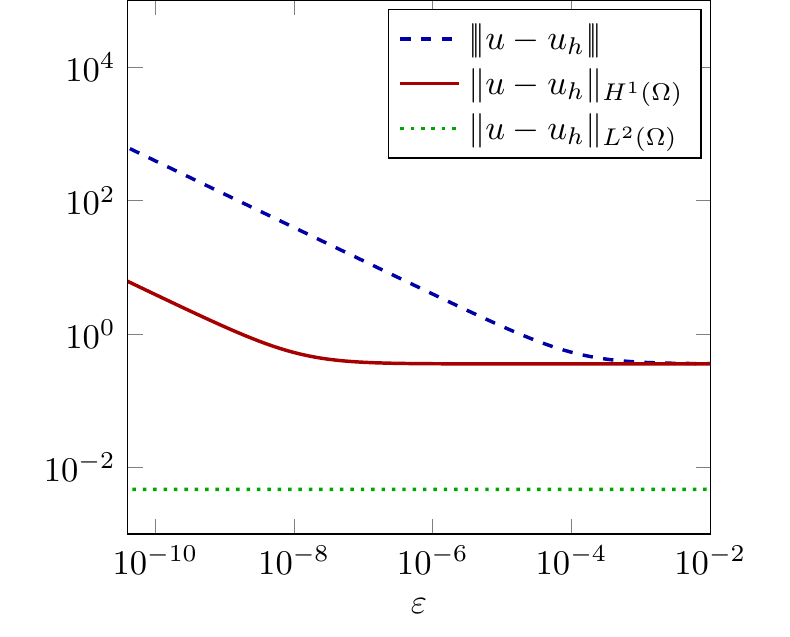} \\
      \end{center}
      \caption{Discretization errors.}
      \label{fig:testcase2results}
    \end{subfigure}
  \end{center}
  \vspace*{-0.6cm}
  \caption{Example 1 with a quadrilateral grid.}
  \vspace*{-0.1cm}
  \label{fig:testcase2}
\end{figure}

We observe that the impact of a small $\varepsilon$ is less severe on the quadrilateral grid compared to the triangular grid. The error is several orders of magnitude smaller and the diverging effect only starts at a smaller value of $\varepsilon$. This difference is explained in the next paragraph, which is not a rigorous proof, but gives an intuitive illustration of the nature of the difference. It further serves as a motivation for the design of Examples 2 and 3.

\subsubsection*{Discussion of the difference between triangular and quadrilateral grids}
In Section~\ref{sec:energyNorm} we have seen that the numerical solution closely approximates the energy norm projection of the analytical solution on the finite dimensional function space. For large values of $\lambda_T$, the contribution of normal derivatives to the energy norm diminishes and we can consider the energy norm to be a weighted combination of an $L^2(\pO)$-norm on the boundary and an $H_0^1(\Omega)$-seminorm in the interior. For increasing $\lambda_T \propto \varepsilon^{-1}$, the balance between these weighted norms is lost, and the energy norm projection basically consists of an $L^2(\pO)$-norm projection for all \dofs\ supported on the boundary and an $H_0^1(\Omega)$-seminorm projection in the interior for the remaining \dofs. Hence, in the limit of $\varepsilon \rightarrow 0$, the numerical solution $u_h$ satisfies:
\begin{equation}
u_h|_{\pO} \stackrel{\varepsilon \rightarrow 0}{\longrightarrow} u_h^\ast :=  \operatornamewithlimits{argmin}_{v_h\in V_h|_{\pO}} \Vert \lambda_T^{\frac12} (v_h - g) \Vert_{L^2(\pO)}^2.
\end{equation}
Hence $M := \mathrm{dim}(V_h|_{\pO})$ \dofs\ are fixed independently of the remaining part of the variational formulation. Note that $M=16K+7$ for the triangular grid (corresponding to the number of dots minus one in Figure~\ref{fig:triangDofs}) and that $M=8K+8$ for the quadrilateral grid (corresponding directly to the number of dots in Figure~\ref{fig:quadDofs}). Besides constraining the boundary values, the $L^2(\pO)$-norm projection on the boundary can also affect the normal derivatives or even the complete numerical solution in the cut elements however. Let $N$ denote the number of basis functions with support on $\pO$ (squares in Figure~\ref{fig:explanation}), then we have $N = 16K+8$ for both meshes. Note that $N$ is also equal to the number of basis functions supported on cut elements. For the triangular grid we have $M = N-1$, which implies that in the limit of $\varepsilon \to 0$ only one \dof\ in all cut elements together is not constrained by the boundary condition. For the quadrilateral mesh we have $M = 1/2 N + 4$, such that only approximately half of the \dofs\ supported on the boundary is constrained by the boundary condition. Along the straight parts of the boundary, the normal derivatives can therefore be set by the $H_0^1(\Omega)$-seminorm projection in the interior independent of the boundary condition. Only in the corners the normal derivatives are constrained by the boundary condition. As the problematic area is much smaller compared to the case with the triangular mesh, the impact of large stabilization parameters is -- although asymptotically the same -- less severe. 
\begin{figure}[h!]
  \begin{center}
    \begin{subfigure}[t]{.49\textwidth}
      \begin{center}
        \includegraphics[width=0.9\textwidth]{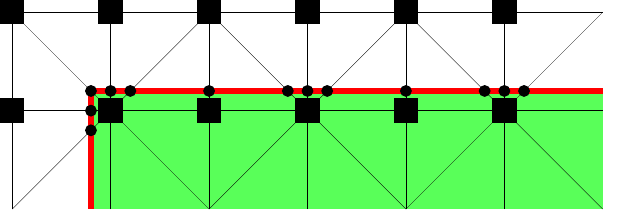}
        \caption{Triangular grid.\label{fig:triangDofs}}
      \end{center}
    \end{subfigure}
    \hfill
    \begin{subfigure}[t]{.49\textwidth}
      \begin{center}
        \includegraphics[width=0.9\textwidth]{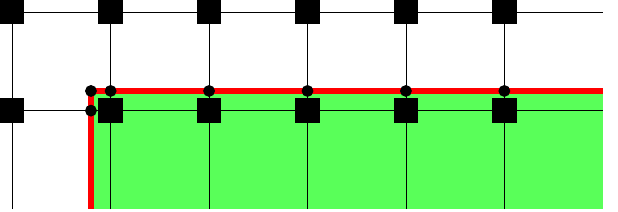}
      \caption{Quadrilateral grid.\label{fig:quadDofs}}
      \end{center}
    \end{subfigure}
  \end{center}
  \vspace{-.6cm}
  \caption{Sketch of \dofs \ of $V_h$ (squares) and \dofs \ of $V_h|_{\pO}$ (dots).}
  \label{fig:explanation}
\end{figure}

From Figures~\ref{fig:testcase1results} and \ref{fig:testcase2results} it is clear that small values of $\varepsilon$ do not only yield bad approximations, but actually cause diverging discretization errors. To explain this effect, we note that the boundary data $g$ has magnitudes $\sim 1$ whilst the basis functions that are only supported on cut elements have values of magnitude $\sim \varepsilon$ on the boundary. The $L^2(\pO)$-norm projection of the boundary condition therefore constrains the coefficients of these basis functions to a value of magnitude $\sim \varepsilon^{-1}$. This results in gradients in the cut elements that scale with $\varepsilon^{-1}$, which has also been observed by peak stresses in immersed elasticity problems \cite{Miegroet2007,Jiang2015}. These large gradients introduce an $\mathcal{O}(\varepsilon^{-\frac12})$ error in the $H^1(\Omega)$-norm, \emph{i.e.,} the sum of the volumes of the cut elements has a magnitude of $\sim \varepsilon$, whilst the error in the gradient squared has a value of magnitude $\sim \varepsilon^{-2}$. We notice that the $L^2(\Omega)$-norm error seems to be unaffected by the sliver cuts.

\subsection{Example 2: An overlapping square with mixed boundary conditions}
In this example we reuse the quadrilateral mesh and try to remove the problematic corner regions. To this end, we pose Neumann conditions on the left and right boundaries, $\pO_N =  \{|x| = 1+\varepsilon\}$, and Dirichlet conditions on the lower and upper boundaries, $\pO_D = \{ |y| = 1 + \varepsilon \}$.
Our hypothesis is that this will yield good approximation results in the $H^1(\Omega)$-norm for the quadrilateral grid, because in that case an $L^2(\pO_D)$-norm projection on the Dirichlet boundary will constrain $M = 4K+6$ \dofs\ whilst the number of basis functions supported on the Dirichlet boundary equals $N = 2M$.
On all elements, including the corner elements, there is exactly one constraint for every two degrees of freedom.
This allows to approximate the boundary values without affecting the normal derivatives. 

The results are visible in Figure~\ref{fig:results3} and indeed show
that in this situation the error in the $H^1(\Omega)$-norm is robust
with respect to $\varepsilon$, even though
the error in the mesh-dependent energy norm still scales with
$\varepsilon^{-\frac12}$. This is because also the best possible approximation of the boundary conditions has an error, such that the energy norm error increases for increasing $\lambda$. Furthermore, we notice that the errors in the
energy norm are reduced by approximately a factor $\sqrt{2}$, which is
accounted for by the reduction of the size of the Dirichlet boundary
by a factor $2$.
\begin{figure}[h!]
  \begin{center}
    \includegraphics[trim=4mm 4mm 4mm 2mm, width=0.9\textwidth]{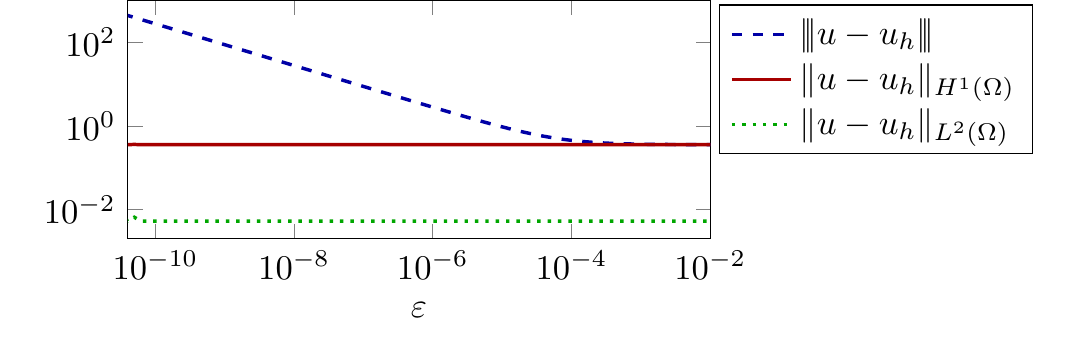} \\
  \end{center}
  \caption{Discretization errors for Example 2.}
  \label{fig:results3}
\end{figure}

\subsection{Example 3: A tilted square with mixed boundary conditions}
In this example we demonstrate that Example 2, be it useful for academic purposes, does not represent a general case for straight boundaries and tensor product grids. The reason that the second example yields robust approximation errors is that the Dirichlet boundary is parallel to a grid line, such that $N=2M$, \emph{i.e.,} the number of \dofs\ constrained by the $L^2(\pO_D)$-norm projection is exactly right for linear bases. This is not generally the case for immersed methods however. In this third example we do not consider the square with boundaries $\pO_N = \{|x| = 1+\varepsilon\}$ and $\pO_D = \{|y| = 1+\varepsilon\}$, but consider a kinked domain with boundaries $\pO_N = \{|x - \varepsilon y| = 1\}$ and $\pO_D = \{|y - \varepsilon x| = 1\}$ as can be seen in Figure~\ref{fig:testcase4mesh}. The (quadrilateral) grid and the analytical solution are the same as in the previous examples.

\begin{figure}
  \begin{center}
    \begin{subfigure}[t]{.59\textwidth}
    \begin{minipage}{.9\textwidth}
      \begin{center}
      \begin{tikzpicture}
        [ spy using outlines={rectangle, inner sep=0pt, width=1.5cm,
          height=4.5cm, very thick,black,magnification=3, connect
          spies ,opacity=1.0}, scale=0.5 ] \def\dw{1} \def\dr{10}
        \def\daa{0.9130435} 
        \def\dab{1.0740741} 
        \def\db{1.1} 
        \def\dc{3} 

        \draw[ultra thin,red] (\daa*\dw,-0.4*\dw) --
        (\daa*\dw,10.4*\dw); \draw[ultra thin,red] (\dw,-0.4*\dw) --
        (\dw,10.4*\dw);

        \draw[ultra thin,red] (10*\dw-\dab*\dw,-0.4*\dw) --
        (10*\dw-\dab*\dw,10.4*\dw); \draw[ultra thin,red]
        (10*\dw-\dw,-0.4*\dw) -- (10*\dw-\dw,10.4*\dw);

        \draw[ultra thin,draw=black,fill=green,opacity=0.65]
        (10*\dw-\daa*\dw,10*\dw-\daa*\dw)
        -- (10*\dw-\dab*\dw,\dab*\dw)
        -- (\daa*\dw,\daa*\dw)
        -- (\dab*\dw,10*\dw-\dab*\dw)
        -- cycle ;

        \fill[red,opacity=0.2] (\daa*\dw,-0.4*\dw) --
        (\daa*\dw,10.4*\dw) -- (\dw,10.4*\dw) -- (\dw,-0.4*\dw);
        \fill[red,opacity=0.2] (10*\dw-\dab*\dw,-0.4*\dw) --
        (10*\dw-\dab*\dw,10.4*\dw) -- (9*\dw,10.4*\dw) --
        (9*\dw,-0.4*\dw);
        \draw [draw=none] (\daa*\dw,-0.4*\dw) -- (\dw,-0.4*\dw) node
        [pos=0.5,anchor=north,yshift=0.03cm,scale=0.35]
        {$\frac{\varepsilon}{1-\varepsilon}$}; \draw [draw=none]
        (10*\dw-\dab*\dw,-0.4*\dw) -- (9*\dw,-0.4*\dw) node
        [pos=0.5,anchor=north,yshift=0.03cm,scale=0.35]
        {$\frac{\varepsilon}{1+\varepsilon}$};

        \foreach \i in {0,...,9} { \foreach \j in {0,...,9} {
            \draw [ultra thin,opacity=1.0] (\i*\dw,\j*\dw+0.5*\dw) --
            (\i*\dw+\dw,\j*\dw+0.5*\dw); \draw [ultra
            thin,opacity=1.0] (\i*\dw+0.5*\dw,\j*\dw) --
            (\i*\dw+0.5*\dw,\j*\dw+\dw); } } \foreach \i in {0,...,9}
        { \foreach \j in {0,...,10} { \draw [ultra thin,opacity=1.0]
            (\i*\dw,\j*\dw) -- (\i*\dw+\dw,\j*\dw); } } \foreach \i in
        {0,...,10} { \foreach \j in {0,...,9} { \draw [ultra
            thin,opacity=1.0] (\i*\dw,\j*\dw) -- (\i*\dw,\j*\dw+\dw);
          } }

        \coordinate (spyat) at (0.8*\dw,0.6*\dw); \coordinate
        (showspyat) at (-2.3*\dw,4*\dw); \spy on (spyat) in node
        [fill=white] at (showspyat);

        \coordinate (spyatb) at (10*\dw-1.06*\dw,0.6*\dw); \coordinate
        (showspyatb) at (10*\dw+2.3*\dw,4*\dw); \spy on (spyatb) in
        node [fill=white] at (showspyatb);
      \end{tikzpicture}
    \end{center}
    \end{minipage}
      \caption{Mesh and domain $\Omega$.}
      \label{fig:testcase4mesh}
    \end{subfigure}
    \hfill
    \begin{subfigure}[t]{.39\textwidth}
    \begin{minipage}{\textwidth}
      \begin{center}
        \includegraphics[trim=4mm 4mm 4mm 2mm, width=0.995\textwidth]{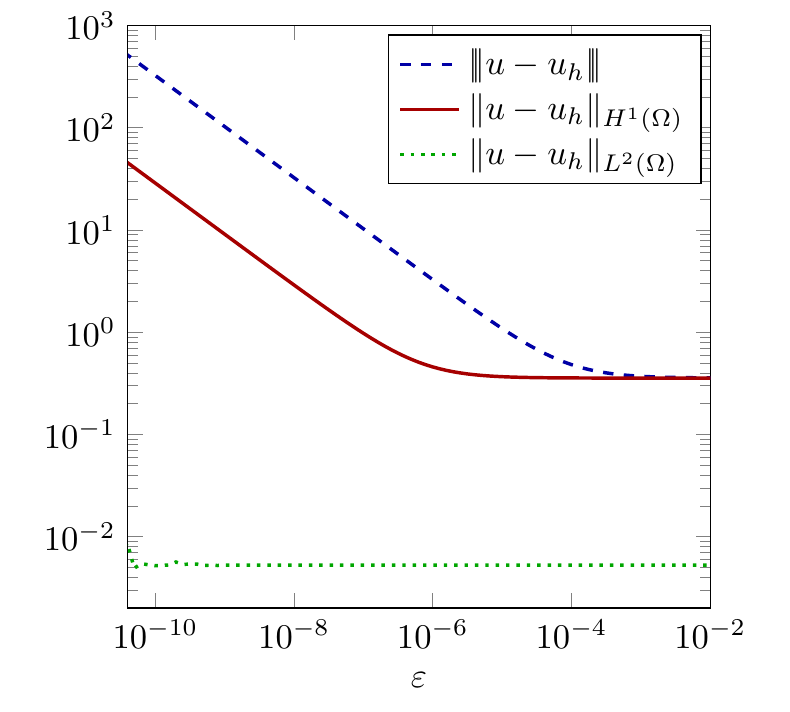}
      \end{center}
    \end{minipage}
      \vspace{.3cm}\caption{Discretization errors.}
      \label{fig:testcase4results}
    \end{subfigure}
  \end{center}
  \vspace*{-0.6cm}
  \caption{Example 3.}
  \vspace*{-0.1cm}
  \label{fig:testcase4}
\end{figure}

In this configuration we have $M=8K+6$ constraints in the $L^2(\pO_D)$-norm projection on the Dirichlet boundary and $N=8K+10$ basis functions that are supported on the Dirichlet boundary. Even though there is a kernel of dimension $4$ in the projection of the boundary conditions on the nodes that are supported on the Dirichlet boundary, basis functions with values of magnitude $\sim \epsilon$ on the boundary will be constrained to nodal values of magnitude $\sim \varepsilon^{-1}$. The results in Figure~\ref{fig:testcase4results} indeed show similar behavior to the first example.

\subsection{Example 4: A rounded square ($\Vert \cdot \Vert_8$-norm unit ball)}
The previous examples consisted of domains with piecewise straight
boundaries. Because unfitted methods are generally used for complex
domains, this example involves a curved boundary that creates sliver-like cuts which can occur in many realistic applications. The
boundary of the domain is defined by the function
$x^8+y^8 = (1+\varepsilon)^8$ and Dirichlet conditions are imposed on
the complete boundary. A sketch of the domain is visible in
Figure~\ref{fig:testcase5mesh}.

\begin{figure}[h!]
  \begin{center}
    \begin{subfigure}[t]{.49\textwidth}
      \begin{center}
      \begin{tikzpicture}
      [ spy using outlines={rectangle, inner sep=0pt, width=1.1cm,
        height=4.5cm, very thick,black,magnification=2.3, connect
        spies ,opacity=1.0}, scale=0.5 ] \def\dw{1} \def\dr{10}
      \def\da{0.92} 
      \def\db{1.1} 
      \def\dc{3} 

      \begin{axis}[
        width=11.6cm, height=11.6cm,
        xmin=-1.25,xmax=1.25, ymin=-1.25,ymax=1.25, grid=none,
        xtick=\empty, ytick=\empty, axis lines=none, scale=1.0 ]
        \addplot [ultra
        thin,domain=0:360,samples=300,fill=green,opacity=0.65] ( {
          1.02*cos(x)/( ((cos(x))^8 + (sin(x))^8)^(1/8)) }, {
          1.02*sin(x)/( ((cos(x))^8 + (sin(x))^8)^(1/8)) } );
      \end{axis}

      \draw[ultra thin,red] (\da*\dw,-0.4*\dw) -- (\da*\dw,10.4*\dw);
      \draw[ultra thin,red] (\dw,-0.4*\dw) -- (\dw,10.4*\dw);

      \fill[red,opacity=0.2] (\da*\dw,-0.4*\dw) -- (\da*\dw,10.4*\dw)
      -- (\dw,10.4*\dw) -- (\dw,-0.4*\dw); \draw [draw=none]
      (\da*\dw,-0.4*\dw) -- (\dw,-0.4*\dw) node
      [pos=0.5,anchor=north,yshift=0.03cm,scale=0.8] {$\varepsilon$};

      \foreach \i in {0,...,9} { \foreach \j in {0,...,9} {
          \draw [ultra thin,opacity=1.0] (\i*\dw,\j*\dw+0.5*\dw) --
          (\i*\dw+\dw,\j*\dw+0.5*\dw); \draw [ultra thin,opacity=1.0]
          (\i*\dw+0.5*\dw,\j*\dw) -- (\i*\dw+0.5*\dw,\j*\dw+\dw); } }
      \foreach \i in {0,...,9} { \foreach \j in {0,...,10} { \draw
          [ultra thin,opacity=1.0] (\i*\dw,\j*\dw) --
          (\i*\dw+\dw,\j*\dw); } } \foreach \i in {0,...,10} {
        \foreach \j in {0,...,9} { \draw [ultra thin,opacity=1.0]
          (\i*\dw,\j*\dw) -- (\i*\dw,\j*\dw+\dw); } }

      \coordinate (spyat) at (0.8*\dw,1*\dw); \coordinate (showspyat)
      at (-2.3*\dw,4.5*\dw); \spy on (spyat) in node [fill=white] at
      (showspyat);
    \end{tikzpicture}
    \end{center}
      \caption{Mesh and domain $\Omega$.}
      \label{fig:testcase5mesh}
    \end{subfigure}
  \hfill
  \begin{subfigure}[t]{.49\textwidth}
    \begin{center}
      \includegraphics[trim=4mm 4mm 4mm 2mm,
      width=0.995\textwidth]{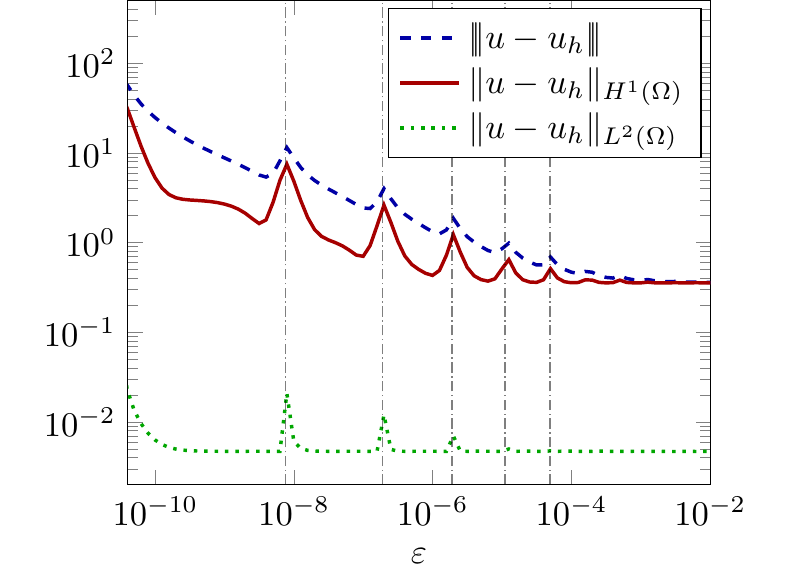}
      \end{center}
      \caption{Discretization errors.}
      \label{fig:testcase5results}
      \label{fig:results5}
    \end{subfigure}
  \end{center}
  \vspace*{-0.6cm}
  \caption{Example 4.}
  \vspace*{-0.1cm}
  \label{fig:testcase5}
\end{figure}

The results are visible in Figure~\ref{fig:testcase5results}. From the results
we note that for this example not even the error in the
$L^2(\Omega)$-norm is robust with respect to the cut
configuration. Furthermore, a peculiar effect is observed as the
errors don't simply increase for decreasing $\varepsilon$, but peak at
certain values. This occurs because a smaller value of $\varepsilon$
doesn't simply imply thinner cut elements for this test case. The zones of the domain that constitute the thin (or sliver) cut
elements -- the parts of the domain for which $|x|>1$ or
$|y|>1$ -- do not only get thinner but also get shorter for
decreasing $\varepsilon$. The intersections between the domain
boundary and the grid lines $x = \pm 1$ and $y = \pm 1$ form the
boundaries of these zones, and move towards the points $x = \pm 1$,
$y = 0$ and $x = 0$, $y = \pm 1$ for $\varepsilon \rightarrow 0$. As a result, the cut elements become
thinner for decreasing $\varepsilon$ until these intersections shift
through a vertex, removing the thinnest cut element from the
system. At this point the error decreases, after which it again
increases until the intersection shifts through the next
vertex. Counting the number of degrees of freedom in the system
confirms that the errors peak at values of $\varepsilon$ where the
intersection between the boundary and the grid lines $x = \pm 1$ and
$y = \pm 1$ shift through a vertex, because at these values also the
number of degrees of freedom is reduced, which is indicated by the
vertical gray dash dotted lines in the figure.

\subsection{Higher order basis functions and three-dimensional domains}
The same effects as observed in the previous examples occur for higher order basis functions and three-dimensional domains. The fourth example has also been used to investigate the behavior of a second order discretization. The results are visible in Figure~\ref{fig:2ndOrder}. Note that this also requires a larger stabilization parameter as the space $V_h|_T^0$ in \eqref{eq:choice} is larger for higher order bases. In Figure~\ref{fig:threedimensional} the results of example 4 in a three-dimensional setting are visible. The domain is defined as $x^8+y^8+z^8 < (1+\varepsilon)^8$, Dirichlet conditions are imposed on the complete boundary and the approximated solution equals $u = \sin(\pi x) + \sin(\pi y) + \sin(\pi z)$. For this three-dimensional computation we have used a first order trilinear basis on a grid with size $h=1/8$ instead of $h=1/16$ and the maximal refinement depth has been reduced to 0 to save computation time. Also in three dimensions the same effect is observed.

\begin{figure}[h!]
  \begin{minipage}{.49\textwidth}
    \begin{center}
      \includegraphics[trim=4mm 4mm 4mm 2mm,width=\textwidth]{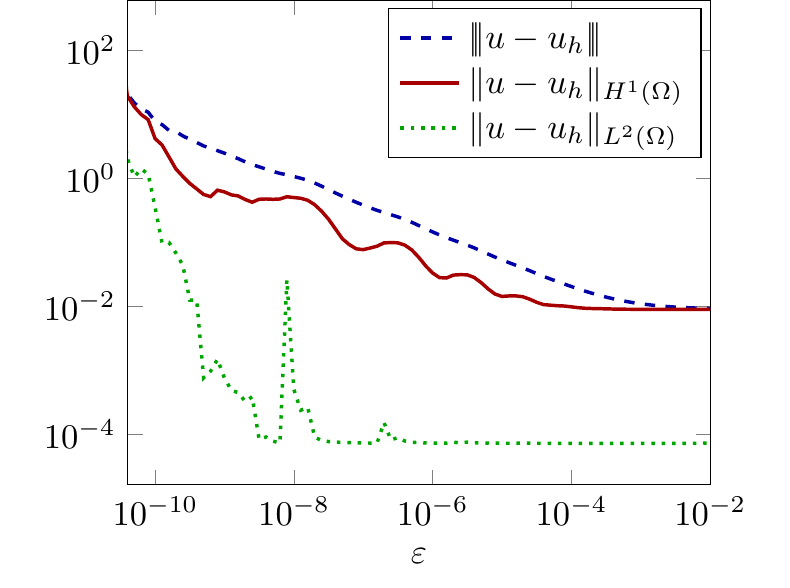}
      \caption{Second order B-splines.}\label{fig:2ndOrder}
    \end{center}
  \end{minipage}
  \begin{minipage}{.49\textwidth}
    \begin{center}
      \includegraphics[trim=4mm 4mm 4mm 2mm,width=\textwidth]{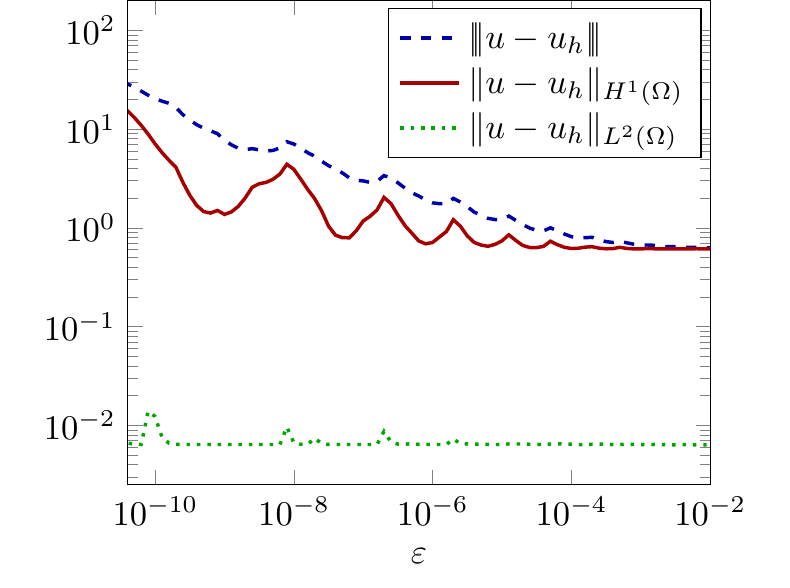}
      \caption{Three-dimensional example.}\label{fig:threedimensional}
        \end{center}
  \end{minipage}
\end{figure}

\subsection{Summary of observations}
In the previous examples we observed that sliver-like cut configurations
(on Dirichlet boundaries) can cause severe problems in the
discretization. The discretization error measured in the $H^1(\Omega)$-norm and
the mesh-dependent energy norm can not be bounded
independently of $\varepsilon$, the parameter that controls the
volume ratios in the sliver cut elements. While the $L^2(\Omega)$-norm error
seems to be robust for the simple cases, it is not for the more
complex examples.

We tried to give an interpretation of the origins that trigger
the large errors for small $\varepsilon$ and were able to
characterize a difference in the relative number of \dofs\ involved in the projection of the Dirichlet conditions by the
penalty terms. As a result, in Example 1 the effect is more severe for the
triangular discretization than for the quadrilateral discretization. On specific
domains, the instabilities due to the sliver cases are even
not triggered at all, as demonstrated in 
Example 2. This is similar to penalty methods for boundary
fitting discretizations, where exactly the right number of \dofs\ is involved in the penalty, such that very large penalties can be
applied \cite {babuska1973penalty}. However, small deviations
immediately display the missing robustness with respect to
$\varepsilon$ and lead to large errors, \emph{cf.}\ Example 3.
We have obtained similar results for a more complicated domain
with curved boundaries, for quadratic basis
functions and in three dimensions, such that the
results do not only apply for piecewise (bi-)linear discretizations
and two-dimensional problems on simple domains.

\section{Conclusion and resolutions of the problem}\label{sec:alternatives}
From Section~\ref{sec:analysis} and \ref{sec:numex} it is evident that the stabilization parameter needs to be bounded in order to guarantee optimal approximation properties. In fact, several modifications of Nitsche's method that preclude the previously discussed problems already exist in the literature. We divide these in three groups. Firstly, we discuss
alternatives to the symmetric Nitsche formulation. Secondly, there
exist approaches to preclude degrees of freedom that
have very small supports in the problem domain, such that the required
stabilization parameter is bounded. Thirdly, we discuss a
stabilization mechanism called the \emph{ghost penalty}, which achieves a
similar effect by penalizing jumps in normal derivatives along boundaries of cut elements.

\subsection{Alternative formulations}
The most obvious alternative for the symmetric Nitsche formulation is
the unsymmetric Nitsche formulation, \emph{e.g.,}
\cite{Burman2012,boiveau2016,Schillinger2016}, which however is not adjoint
consistent. Other methods to weakly impose Dirichlet conditions are
the penalty method and the use of Lagrange multipliers and related
methods, \emph{e.g.,}
\cite{babuska1973penalty,babuska1973lagrange,Fernandez2004,baiges2012}. Another
approach with unfitted grids is to manipulate the function space such
that it allows for the strong imposition of boundary conditions,
\emph{i.e.,} \cite{Hoellig2001,Hoellig2005,Sanches2011}.

\subsection{Precluding \dofs\ with small supports in the problem domain}
When the system does not contain functions with very small supports in
the problem domain, it follows from \eqref{eq:choice} that the 
required stabilization parameter is reduced. This is an additional
advantage of methods that eliminate functions with very small supports
from the system, which were initially introduced in order to improve
the conditioning of the system matrix. The simplest way to achieve
this is by simply excluding these basis functions from the basis,
\emph{e.g.,} \cite{reusken2008analysis,Embar2010,Sanches2011,Verhoosel2015}, which however
may decrease the accuracy. In
\cite{Hoellig2001,Hoellig2005,Rueberg2012,Rueberg2014,Rueberg2016},
functions with small supports in the problem domain are constrained to
geometrically nearby functions, which decreases the required
stabilization parameter without compromising the accuracy. Another
approach that precludes basis functions from having a very small \emph{stiffness} (having the same effect as precluding functions with a very small support) is
using a fictitious domain stiffness, \emph{e.g.,}
\cite{Schillinger2015}, which is thoroughly analyzed in
\cite{Dauge2014}.

\subsection{Ghost penalty}
The ghost penalty, see \cite{Burman2010,burman2012fictitious}, is a consistent penalty term that can be added to the bilinear form and which is customary in methods referred to as CutFEM, see \emph{e.g.,} \cite{burman2014cutfem}.
The ghost penalty introduces a penalty in the vicinity of the boundary that weakly bounds polynomials of neighboring elements to coincide. This is done by adding terms that penalize jumps in the normal derivatives along element interfaces or local projection type penalties \cite{becker01}. This weakly enforces higher order continuity along element interfaces near the boundary, and therefore relates functions with small support in the boundary region to interior functions. Similar to strongly enforcing this higher order continuity by constraining basis functions with small supports to geometrically nearby functions, this effectively controls and bounds the stabilization parameter. Besides bounding the stabilization parameter, penalizing the jumps in the normal derivatives gives control over the gradients in cut elements and therefore impede gradients of magnitude $\sim \varepsilon^{-1}$, which we observed in the cut elements.

\begin{remark}
Inspired by the observations in the experiments, we also suggest a quick fix that is simple to implement. To alleviate the problem of unbounded stabilization parameters, we modify Nitsche's method to a hybrid Nitsche-penalty method. First, we manually set an upper bound for the stabilization parameter $C_\lambda>0$. We then use Nitsche's method on the elements where the required stabilization parameter (computed as in \eqref{eq:choice}) is smaller than this upper bound and use a penalty method (\emph{i.e.,} no flux terms) with penalty parameter $C_\lambda$ on the elements where Nitsche's method would require a larger value than this manually set upper bound. Bounding the penalty parameter is easily motivated from an accuracy point of view, when considering the results and analysis previously presented in this manuscript. It is noted that a pure penalty method is not consistent with the initial problem \eqref{eq:evprob}. This inconsistency can be argued to be acceptable however, considering that this generally only occurs on very small parts of the domain and that on these parts of the boundary the flux terms are negligibly small compared to the penalty terms anyway.

This hybrid Nitsche-penalty method yields good approximations in the $H^1(\Omega)$-norm for the fourth test case, with $C_\lambda = 16 \cdot 10^3$. The results are visible in Figure~\ref{fig:ResultsNewModification} and show that the quality of the approximation is virtually independent of $\varepsilon$. This suggests that this could be a simple and straightforward fix for this problem. We consider this a preliminary result however, as this adaptation of the method has not undergone rigorous testing. Also setting the value of $C_\lambda$ needs a more thorough analysis and could require different values for different mesh sizes, higher order discretizations and other problems than Poisson's problem.  

\begin{figure}[h!]
  \begin{center}
    \includegraphics[trim=4mm 4mm 4mm 2mm,
    width=0.9\textwidth]{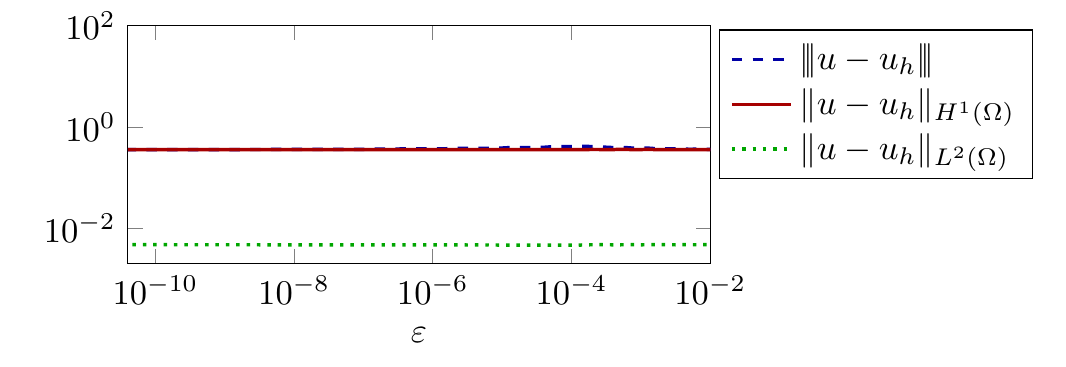}
  \end{center}
  \caption{Discretization errors of the modification of Nitche's method for Example 4. The energy norm error for this hybrid method is defined as: $\enorm{v}^2 := \Vert \nabla v \Vert_{\Omega}^2 + \Vert \lambda_T^{-\frac12} \partial_n v \Vert_{\Gamma^{\rm Nitsche}}^2 + \Vert \lambda_T^{\frac12} v \Vert_{\Gamma^{\rm Nitsche}}^2 + \Vert C_\lambda^{\frac12} v \Vert_{\Gamma^{\rm penalty}}^2$.}
  \label{fig:ResultsNewModification}
\end{figure}
\end{remark}

\section*{Acknowledgement}
\noindent
The research of F.\ de Prenter was funded by the NWO under the Graduate Program Fluid \& Solid Mechanics.
C. Lehrenfeld gratefully acknowledges funding by the German Science Foundation (DFG) within the project LE 3726/1-1.

\section*{References}
\bibliographystyle{elsarticle-num}
\bibliography{literature}

\end{document}